\documentclass[12pt, a4paper]{article}

\usepackage[utf8]{inputenc}
\usepackage[T1]{fontenc}
\usepackage{amsmath}
\usepackage{amsfonts}
\usepackage{amssymb}
\usepackage{amsthm}
\usepackage{geometry}
\usepackage{hyperref}
\usepackage{mathrsfs}

\hypersetup{
    colorlinks=true,
    linkcolor=blue,
    filecolor=magenta,      
    urlcolor=blue,
    citecolor=blue,
}

\theoremstyle{plain}
\newtheorem{theorem}{Theorem}[section]
\newtheorem{lemma}[theorem]{Lemma}
\newtheorem{conjecture}[theorem]{Conjecture}

\theoremstyle{definition}
\newtheorem{definition}[theorem]{Definition}

\newtheorem*{hypothesis}{Schinzel's H Hypothesis}
\newtheorem*{maintheorem}{Main Theorem}

\title{\textbf{A generalization of the Erdős-Sierpiński conjecture}}

\author{
    Amirali Fatehizadeh \\
    \small \textit{Faculty of Mathematical Sciences, Shahid Beheshti University, Tehran, Iran} \\
    \small \texttt{Amirali.fatehizadeh@gmail.com} \\
    \small \texttt{A.fatehizadeh@mail.sbu.ac.ir}
}
\date{}

\begin{document}
\sloppy

\maketitle

\begin{abstract}
In this paper, we investigate the combinatorial structure and asymptotic distribution of the solution set of the equation $\sigma(n + 1) = k\sigma(n)$ for a given integer $k > 1$. From a combinatorial perspective, the solutions to this equation are closely related to the concept of $k$-layered numbers, which are a generalization of Zumkeller numbers. In the analytic section, which constitutes the core of this research, we employ the framework of probabilistic number theory and an extension of the classical Kubilius model to study the oscillatory and local behavior of the sum-of-divisors function. Utilizing the truncation technique for arithmetic functions and applying the Chinese Remainder Theorem, the problem is reduced to a synthetic measure space equipped with independent random variables. Subsequently, by applying the optimized version of the Kolmogorov-Rogozin anti-concentration inequality (Petrov's theorem) to the difference of additive variables and finely tuning the error parameters, we prove that the natural density of this set is zero. The main quantitative outcome of this approach is the derivation of the explicit upper bound 
$A_k(x) \ll_k \frac{x}{\sqrt{\log \log \log x}}$
for the counting function of the solutions. Finally, alongside the zero asymptotic density, relying on the framework of polynomials and Schinzel's H Hypothesis, we establish the conditional infinitude of the solution set for the case $k = 2$ and formulate the existential results.

\vspace{1em}
\noindent \textbf{Keywords:} Erdős-Sierpiński conjecture, Kubilius model, Divisor function \\
\textbf{MSC 2020:} 11N37, 11K65
\end{abstract}

\section{Introduction}
The study of the structural properties of divisors of an integer and the value distribution of the sum-of-divisors function has roots in ancient Greece. The classification of numbers into perfect, deficient, and abundant based on the values of the proper divisor function (aliquot sum), namely $s(n) = \sigma(n) - n$, was among the earliest steps in this direction. A classical generalization of perfect numbers is the concept of $k$-perfect numbers, for which the equation $\sigma(n) = kn$ holds for a given integer $k > 1$.

In 2003, Zumkeller introduced a novel combinatorial generalization of perfect numbers. A positive integer $n$ is called a Zumkeller number if the set of its positive divisors can be partitioned into two disjoint subsets whose elements sum to $\frac{\sigma(n)}{2}$ respectively. Trivially, all perfect numbers are Zumkeller numbers. Subsequently, researchers in this field, such as Joukar (2019, 2022) and Mahanta, Saikia, and Yaqubi (2020), extended this concept and introduced the notion of $k$-layered numbers. An integer $n$ is termed $k$-layered if its set of divisors can be partitioned into $k$ disjoint subsets with equal sums. Structurally, there are profound connections between these concepts; for instance, every perfect number is necessarily a $2$-layered number, but the converse is not generally true.

Alongside combinatorial properties, studying the local and asymptotic behavior of the function $\sigma(n)$, particularly on consecutive arguments, has always been one of the fundamental challenges in analytic number theory. While the mean behavior of this function is well-understood, its local fluctuations are highly irregular. The celebrated Erdős-Sierpiński conjecture, which asserts the existence of infinitely many solutions to the equation $\sigma(n + 1) = \sigma(n)$, remains a prominent unsolved problem in mathematics.

Over the past decades, extensive efforts have been made to understand similar equations and investigate the density of their solution sets. Erdős, Pomerance, and Sárközy (1987) demonstrated that locally repeated values of certain arithmetic functions possess zero density. It is worth noting that the bound they obtained for $\sigma(n + 1) = \sigma(n)$ is of an exponential order, which is significantly sharper than our bound in the general case; however, our method is robust enough to be applicable to other generalizations of such equations. Furthermore, in various studies, Pomerance and Pollack investigated the density of integers related to amicable pairs and reversals of aliquot sequences, showing that the solutions to equations such as $\frac{\sigma(n)}{n} = \frac{\sigma(m)}{m}$ exhibit upper bounds with very slow growth. In a more recent study, Kobayashi and Trudgian (2020) examined the natural density of the solution set for the inequality $\sigma(n + 1) \geq 2\sigma(n)$. The common thread among all these studies is the establishment of the fact that the coincidence of values of multiplicative functions on consecutive arguments is a rare phenomenon among integers.

Building upon this interplay of combinatorial and analytic concepts in the realm of arithmetic functions, the present paper deals with the asymptotic analysis of a novel generalization of the Erdős-Sierpiński conjecture. This generalization bridges the local behavior of the sum-of-divisors function with the structural concept of $k$-layered and $k$-perfect numbers:
\[ \sigma(n + 1) = k\sigma(n) \]
where $k > 1$ is an integer.

This equation was first introduced and studied in an exploratory paper by Torabi and Fatehizadeh (2020). In that work, adopting an elementary algebraic approach, alongside introducing families of solutions, preliminary results were established under specific conditions. For instance, for the equation $\sigma(n + 1) = 2\sigma(n)$, the only prime solution is $n = 5$.

Nevertheless, the nature of that research was existential. The fundamental question is: does the solution set of this equation possess zero density among the natural numbers? And more importantly, what is the quantitative distribution of the counting function $A_k(x) := \text{\#}\{n \leq x : \sigma(n + 1) = k\sigma(n)\}$? The current paper is conducted with the aim of answering these questions.

To achieve explicit asymptotic bounds, we harness the tools of probabilistic number theory. The cornerstone of this branch rests upon the fundamental idea that the divisibility of an integer by distinct primes exhibits quasi-independent behavior. This idea culminated in the classical Kubilius model, a framework wherein additive arithmetic functions are approximated using the sum of independent random variables over prime numbers.

However, our approach in this paper features subtle extensions and deviations from the standard application of the Kubilius model. Classical models are typically employed to derive global limiting distributions, such as the Erdős-Kac central limit theorem, to describe the overall dispersion of a function's values. In contrast, our problem, after applying a logarithmic transformation to the main equation, concerns examining the behavior of the additive function $g(n) = \log \left(\frac{\sigma(n)}{n}\right)$, and we seek to investigate the probability of the difference $g(n + 1) - g(n)$ falling within a small neighborhood of a target point.

To overcome this challenge, our methodology proceeds as follows: first, using the truncation technique, we restrict the function $g(n)$ to its components supported on small primes. Then, by applying the Chinese Remainder Theorem, we asymptotically approximate the arithmetic distribution by a finite model space. In this modeled space, unlike the original space of integers, the corresponding variables can be rigorously assumed to be independent.

Now, within this modeled space, the distinguishing feature of our method becomes apparent: instead of searching for central limit theorems, we utilize the concept of the Lévy concentration function and anti-concentration inequalities. Relying on the optimized version of the Kolmogorov-Rogozin inequality formulated by Kesten and Petrov, we demonstrate that in this approximated probability model, the variables are widely dispersed along the real line. Consequently, the probability of their concentration around the target value tends to zero at a specific rate.

By combining these advanced analytic and probabilistic tools, the fundamental result of this paper is stated as follows:

\begin{maintheorem}
For any integer $k > 1$ and for sufficiently large $x$, the counting function of the solution set $A_k(x) := \text{\#}\{n \leq x : \sigma(n + 1) = k\sigma(n)\}$ satisfies the following inequality:
\[ A_k(x) \ll_k \frac{x}{\sqrt{\log \log \log x}}. \]
\end{maintheorem}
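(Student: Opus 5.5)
We argue along the lines sketched in the introduction. Write $g(n)=\log(\sigma(n)/n)=\sum_{p^a\| n}\log\bigl(1+p^{-1}+\dots+p^{-a}\bigr)$, an additive function. If $\sigma(n+1)=k\sigma(n)$, then
\[ g(n+1)-g(n)=\log k+\log\frac{n}{n+1}, \]
so $D(n):=g(n+1)-g(n)$ lies in the interval $I_x=[\log k-2/x_0,\log k]$ for $x_0<n\le x$. It therefore suffices to show that
\[ \#\{n\le x: D(n)\in I\}\ll_k x/\sqrt{\log\log\log x} \]
for every interval $I$ of length about $\varepsilon$, for a suitable $\varepsilon$.

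\textbf{Step 1 (truncation).} Fix a cutoff $y=y(x)$ and split $g=g_y+g^y$, where $g_y$ only involves primes $p\le y$. For the tail, $g^y(n)\le\sum_{p\mid n,\,p>y}\log(1+1/(p-1))\le\sum_{p\mid n,p>y}2/p$. Summing over $n\le x$ gives
\[ \sum_{n\le x}g^y(n)\ll x\sum_{p>y}p^{-2}+\sum_{y<p\le x}1\cdot\frac{1}{p}\cdot\frac{x}{p}\ll x/y, \]
and the same holds at $n+1$. By Markov's inequality, $|g^y(n+1)-g^y(n)|>\varepsilon$ for at most $O(x/(\varepsilon y))$ integers $n\le x$. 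On the remaining $n$, the truncated difference $D_y(n)=g_y(n+1)-g_y(n)$ lies in an interval $J$ of length $3\varepsilon$.

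\textbf{Step 2 (CRT model).} Put $Q=\prod_{p\le y}p^{a_p}$ with $a_p$ chosen so that $p^{a_p}$ is just above $\log x$ (or simply $a_p=\lfloor\log x/\log p\rfloor$ truncated). Integers $n$ divisible by a prime power $p^{a}$ with $p^a>p^{a_p}$ and $p\le y$ contribute negligibly, $O(x\sum_{p}p^{-a_p})$. Apart from these, $D_y(n)$ depends only on $n\bmod Q$. Provided $Q\le x^{1/2}$, which is arranged by taking $y\asymp\log\log x$ or smaller so that $Q\le e^{O(y\log\log x)}$, counting over $n\le x$ equals $x/Q$ times the count over residues, up to an error $O(Q)$. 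By CRT, the residues mod $Q$ form a product space. Under the uniform measure, the coordinates $n\bmod p^{a_p}$ are independent, and $D_y=\sum_{p\le y}X_p$ with $X_p=h_p(n+1)-h_p(n)$ independent across $p$. Each $X_p$ takes the value $0$ with probability $1-2/p$, is $\ge\log(1+1/p)\gg 1/p$ with probability $\approx 1/p$, and is $\le -\log(1+1/p)$ with probability $\approx1/p$.

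\textbf{Step 3 (anti-concentration), the main obstacle.} We must bound $Q(D_y;3\varepsilon)=\sup_J P(D_y\in J)$. By Kolmogorov–Rogozin in Kesten's form,
\[ Q\Bigl(\sum X_p;L\Bigr)\ll L\Bigl(\sum_p \lambda_p^2(1-Q(X_p;\lambda_p))\Bigr)^{-1/2}\quad\text{for }\lambda_p\le L. \]
The difficulty is that $X_p$ has spread only about $1/p$, while $\varepsilon$ must stay fixed relative to this spread. The natural choice is $\lambda_p=L=3\varepsilon$, which requires $p\lesssim1/\varepsilon$ so that the atoms $\pm\log(1+1/p)$ are separated by more than $\lambda_p$; then $1-Q(X_p;\lambda_p)\gg1/p$. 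The sum becomes $\varepsilon^2\sum_{p\le 1/\varepsilon}1/p\asymp\varepsilon^2\log\log(1/\varepsilon)$, giving $Q\ll(\log\log(1/\varepsilon))^{-1/2}$. A cleaner alternative is to restrict to primes $p\le z$ with $z\approx1/\varepsilon$ (Petrov's refinement allows discarding summands) and condition on the rest. So choose $\varepsilon\asymp1/y$, giving a bound $\ll(\log\log y)^{-1/2}$. With $y\asymp\log x$ (adjusting $a_p$ to keep $Q\le\sqrt x$, e.g. $a_p=1$ plus a square-full exceptional set of size $O(x/y)$), this yields $(\log\log\log x)^{-1/2}$.

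\textbf{Step 4 (collect).} The total is
\[ A_k(x)\ll x/(\varepsilon y)+x/y+x(\log\log y)^{-1/2}+\sqrt x. \]
Here the first term is $O(x)$ if $\varepsilon y\asymp1$. To avoid this we take $\varepsilon=y^{-1/2}$, and $\log\log(1/\varepsilon)\asymp\log\log y$ is still preserved. Every term is then $\ll_k x/\sqrt{\log\log\log x}$. The delicate bookkeeping is balancing $y$, $\varepsilon$, and $Q$: Step 3 is where the rate is decided, and the Markov tail estimate in Step 1 forces $\varepsilon\gg1/y$.
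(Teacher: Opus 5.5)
Your overall route is the same as the paper's. You use the additive function $g(n)=\log(\sigma(n)/n)$, truncate at $y$ with Markov's inequality for the tail, pass to a CRT product model with independent local differences $X_p$, and apply the Kesten form of Kolmogorov--Rogozin with $L=\lambda_p\asymp\varepsilon$ over the primes $5\le p\lesssim 1/\varepsilon$. Your choice $\varepsilon=y^{-1/2}$ instead of the paper's $\varepsilon=1/(13y)$ is harmless. The paper can afford $\varepsilon\asymp 1/y$ only because it uses the sharper tail bound $\sum_{p>y}p^{-2}\ll 1/(y\log y)$, which makes the tail set $\ll x/\log y$. The gap is in the step you yourself flag as delicate: how large $y$ may be while the modulus stays $\le x^{1/2}$ and the set of $n$ where $D_y(n)$ is not periodic stays small. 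That step decides the rate, and as written it does not deliver $(\log\log\log x)^{-1/2}$.

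In Step 2 you conclude that $Q\le x^{1/2}$ forces $y\asymp\log\log x$. With that $y$, Step 3 gives only $(\log\log y)^{-1/2}=(\log\log\log\log x)^{-1/2}$. In Step 3 you switch to $y\asymp\log x$ via ``$a_p=1$ plus a square-full exceptional set of size $O(x/y)$'', and that claim is false. The $n\le x$ with $p^2\mid n(n+1)$ for some $p\le y$ have positive density; already $4\mid n$ or $4\mid n+1$ covers half of all $n$. On these $n$, $D_y(n)$ is not a function of $n\bmod\prod_{p\le y}p$: at $p=2$ alone the discrepancy is $\ell(2^2)-\ell(2)=\log(7/6)$, far larger than $\varepsilon$. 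So these $n$ can be neither discarded nor absorbed.

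The repair is that your Step 2 estimate was too pessimistic. With $p^{a_p}$ just above $\log x$ one has $\log Q<\theta(y)+\pi(y)\log\log x\le 2y\log\log x$. Hence $y=\log x/(5\log\log x)$ already gives $Q\le x^{2/5}$, the exceptional set is $\ll x\pi(y)/\log x\ll x/(\log\log x)^2$, and $\log\log y\sim\log\log\log x$. This is the paper's scale for $y$. The paper instead caps all exponents uniformly at $r=\lfloor\log\log\log x\rfloor$ and takes $y=\log x/(3\log\log x)$. Then the period satisfies $\prod_{p\le y}p^r\le e^{1.02ry}\le x^{1/2}$, and the exceptional set is $\ll x2^{-r}\asymp x(\log\log x)^{-\log 2}=o\bigl(x(\log\log\log x)^{-1/2}\bigr)$.
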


As a direct consequence of this theorem, it is shown that the natural density of the solution set to the equation $\sigma(n + 1) = k\sigma(n)$ is zero.

The remainder of the paper is structured as follows: In Section 2, we review the preliminaries and the precise formalism of Lévy concentration functions, anti-concentration inequalities, the fundamental concepts of the Kubilius model, and relevant theorems concerning prime numbers. In Section 3, by applying the described methodology, we provide the proof of the main theorem. Finally, in Sections 4 and 5, by examining the discovered solutions, we discuss the structural position of these integers.

Throughout this paper, standard asymptotic notation is utilized. The notation $f(x) \ll g(x)$ or $f(x) = O(g(x))$ means there exists an absolute constant $C > 0$ such that $|f(x)| \leq Cg(x)$. Moreover, the notation $f(x) = o(g(x))$ denotes $\lim_{x \to \infty} \frac{f(x)}{g(x)} = 0$.

\section{Preliminaries}
In this section, we review the notations, combinatorial concepts related to the structure of divisors, basic analytic estimates, and ultimately the tools from probabilistic number theory that form the analytic and algebraic foundation of our research.

The equation studied in this paper is deeply intertwined with combinatorial classifications of integers.

\begin{definition}
A positive integer $n$ is called a $k$-layered number if the set of its divisors can be partitioned into $k$ disjoint subsets such that the sum of the elements in each subset is exactly equal to $\frac{\sigma(n)}{k}$.
\end{definition}

In the case $k = 2$, these integers are renowned as Zumkeller numbers. It is trivial that a necessary condition for being a Zumkeller number is the evenness of $\sigma(n)$ and the inequality $\sigma(n) \geq 2n$; hence, all perfect numbers fall into this category.

\begin{definition}
A positive integer $n$ is called a practical number if every positive integer smaller than $n$ can be expressed as a sum of distinct positive divisors of $n$.
\end{definition}

From a structural perspective, profound connections exist among these classes of integers. Research by Peng and Bhaskara Rao (2013) and studies by Mahanta, Saikia, and Yaqubi (2020) have demonstrated that practical numbers play a generative role in the construction of Zumkeller and $k$-layered numbers. For instance, if $n$ is a practical number satisfying $k \mid \sigma(n)$, and $x$ is a $(k-1)$-layered number coprime to $n$, then their product is a $k$-layered number.

To analytically investigate the equation in question, applying a logarithmic transformation to both sides leads to the definition of an additive arithmetic function that plays a crucial role in our proof.

We define the arithmetic function $g: \mathbb{N} \to \mathbb{R}$ as $g(n) := \log \left(\frac{\sigma(n)}{n}\right)$. Obviously, this is an additive function; meaning that for any two coprime integers $m$ and $n$, we have $g(mn) = g(m) + g(n)$. Its evaluation on prime powers is derived as follows, and we denote it by $\ell$:
\[ \ell(p^a) := \log \left( \frac{1 - p^{-(a+1)}}{1 - p^{-1}} \right). \]

We set $\ell(1) = 0$. Since $\sigma(n) \geq n$ for every natural number $n$, we consistently have $g(n) \geq 0$.

In our computational approximations, we will exploit Taylor series bounds for the logarithm function. The upper bound $\log(1 + x) \leq x$ universally holds for $x \geq 0$. Furthermore, for the lower bound, one can use the Taylor expansion to show that $\log(1 + x) \geq \frac{x}{2}$ holds for $x \in [0, 1)$. Consequently, over the primes, we have:
\[ g(p) = \log(1 + p^{-1}) = p^{-1} + O(p^{-2}), \]
and in particular, $g(p) \sim p^{-1}$.

In addition, to control the asymptotic behavior over prime numbers, we utilize the following two classical theorems, widely known as the Chebyshev bound and Mertens' theorem.
Firstly, the Chebyshev function $\theta(x) = \sum_{p \leq x} \log p$ satisfies the inequality $\theta(x) < 1.02x$ for sufficiently large $x$. Secondly, the sum of the reciprocals of primes exhibits the asymptotic behavior $\sum_{p \leq x} p^{-1} = \log \log x + B + o(1)$, which implies $\sum_{p \leq x} \frac{2}{p} \sim 2 \log \log x$.

On the other hand, for the algebraic existential investigations of this equation, we invoke one of the deepest conjectures in number theory.

\begin{hypothesis}
Let $P_1(x), P_2(x), \dots, P_m(x)$ be irreducible polynomials over the ring of integers with positive leading coefficients. Suppose there is no prime number that divides their product for all integers $x$. Then, there exist infinitely many positive integers $n$ such that the values $P_1(n), P_2(n), \dots, P_m(n)$ are all simultaneously prime.
\end{hypothesis}

To find explicit bounds and prove the zero density, we need to formulate the problem within the framework of probabilistic number theory.

Let the sample space be $\Omega = \{1, 2, \dots, N\}$ and $\Sigma$ be the $\sigma$-algebra of all subsets of $\Omega$. For any $A \in \Sigma$, we define the probability measure as $\nu_N(A) = \frac{\text{\#}A}{N}$. Consequently, this forms a uniform discrete probability space. The fundamental challenge in studying additive functions is that arithmetic events related to divisibility are merely quasi-independent and lack absolute independence.

The Fundamental Lemma of Kubilius resolves this challenge by providing a probabilistic modeling in a synthetic measure space. In this modeled space with probability measure $\mathbb{P}$, the $p$-adic valuation arithmetic variable, namely $v_p(n)$, is simulated by random variables $Z_p$ that are mutually independent, and their distribution perfectly matches the limiting distribution of $v_p(n)$ over the natural numbers:
\[ \mathbb{P}(Z_p = a) = \left(1 - \frac{1}{p}\right) \frac{1}{p^a}. \]

However, for the approximation of the arithmetic space by the independent space to be mathematically valid, the target additive function must not be evaluated over all prime divisors; rather, it must be truncated up to a specific prime bound, say $y$. In the Kubilius model, the truncation parameter $y$ relative to the scale parameter $N$ must be chosen such that $\frac{\log y}{\log N} \to 0$.

Under this condition, sieve theory ensures that the error in replacing the distribution of the truncated arithmetic variable $\sum_{p \leq y} g(v_p(n))$ with the distribution of the sum of independent random variables $\sum_{p \leq y} g(Z_p)$---measured as a total variation distance---is strictly controlled and tends to zero.

This rigorous framework allows us to exploit classical tools of independent probabilities to analyze quasi-independent arithmetic structures.

Now that the arithmetic behavior is reduced to a sum of independent random variables via the Kubilius approximation, the ultimate challenge is to control the concentration function of the additive variable $g(n + 1) - g(n)$ in the neighborhood of the target value. Central limit theorems merely describe the global distribution of the variables; however, to bound the maximum probability of the sum of variables falling into a small interval, we require anti-concentration tools.

\begin{definition}
For a random variable $X$ and a scale of length $L > 0$, the Lévy concentration function is defined as:
\[ Q_L(X) := \sup_{a \in \mathbb{R}} \mathbb{P}(X \in [a, a + L]). \]
\end{definition}
This function measures the maximum probability mass that the random variable $X$ can concentrate in any arbitrary interval of length $L$.

To control this concentration in the sum of independent random variables within the Kubilius model, we employ an optimized version of the Kolmogorov-Rogozin inequality, extended by Kesten and Petrov:

\begin{theorem}
Let $X_1, X_2, \dots, X_M$ be independent random variables and $W = \sum_{j=1}^M X_j$. For any global scale parameter $L > 0$, if there exist local scales $L_j$ such that $0 < L_j \leq L$ holds for every $j$, then there exists an absolute constant $A > 0$ such that
\[ Q_L(W) \leq A \cdot L \cdot \left( \sum_{j=1}^M L_j^2 \left(1 - Q_{L_j}(X_j)\right) \right)^{-1/2}. \]
\end{theorem}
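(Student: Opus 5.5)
This inequality is classical: it is the Kolmogorov--Rogozin inequality in the form refined by Kesten and Esseen, and it can simply be cited. If a self-contained argument is wanted, I would follow Esseen's Fourier-analytic route, sketched below.

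\textbf{Step 1: pass to Fourier transforms.} Let $\varphi_j$ be the characteristic function of $X_j$ and set $T := 1/L$. By Esseen's smoothing inequality,
\[ Q_L(W) \leq C\, L \int_{|t|\leq T} |\varphi_W(t)|\,dt, \qquad |\varphi_W(t)| = \prod_{j=1}^M |\varphi_j(t)|. \]
Let $Y_j := X_j - X_j'$ be the symmetrization, with $X_j'$ an independent copy of $X_j$. Then $|\varphi_j(t)|^2 = \mathbb{E}\cos(tY_j)$. Using $u \leq e^{-(1-u)}$, this gives
\[ |\varphi_W(t)| \leq \exp\!\left(-\tfrac12 D(t)\right), \qquad D(t) := \sum_{j=1}^M \mathbb{E}\bigl[1-\cos(tY_j)\bigr]. \]

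\textbf{Step 2: bring in the local concentration functions.} If $|X_j - X_j'| \leq L_j/2$, then $X_j$ lies in an interval of length $L_j$ determined by $X_j'$. Conditioning on $X_j'$ therefore gives
\[ \mathbb{P}\bigl(|Y_j| > L_j/2\bigr) \geq 1 - Q_{L_j}(X_j). \]
Averaging over $t$ gives
\[ \frac{1}{2T}\int_{-T}^{T}\bigl(1-\cos(tx)\bigr)\,dt = 1 - \frac{\sin(Tx)}{Tx} \geq c\,\min(1, T^2x^2). \]
Since $L_j \leq L = 1/T$, every $x$ with $|x| > L_j/2$ satisfies $\min(1,T^2x^2) \geq T^2L_j^2/4$. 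Hence the average of $D$ over $[-T,T]$ is at least
\[ c' \, T^2 S, \qquad S := \sum_{j=1}^M L_j^2\bigl(1 - Q_{L_j}(X_j)\bigr). \]

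\textbf{Step 3 (main obstacle): from an average to a distributional bound.} A lower bound on the average of $D$ does not control $\int e^{-D/2}$, because $1-\cos$ is periodic and $D$ may be small on a large set. I would use the key structural fact that $t \mapsto \sqrt{D(t)}$ is subadditive. Indeed $\sqrt{1-\cos a} = |e^{ia}-1|/\sqrt2$ satisfies the triangle inequality, and one then applies Minkowski in $L^2$ over the randomness and the index $j$. Consequently, for $E_s := \{|t|\leq T : D(t) \leq s\}$, the difference set satisfies $E_s - E_s \subseteq \{D \leq 4s\}$.

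A covering (Kneser/Blichfeldt-type) argument then bounds the Lebesgue measure of $E_s$ in terms of $\sqrt{s}$ and $S$. Concretely, I would aim for
\[ |E_s| \ll \sqrt{s/S} + T\,\mathbf 1\{s \gg T^2 S\}\cdot(\text{small}), \]
more precisely $|E_s| \ll \min\bigl(2T, \sqrt{(1+s)/S}\bigr)$. This is the heart of Esseen's proof, and I expect it to be the delicate point.

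\textbf{Step 4: conclude.} By the layer-cake formula,
\[ \int_{|t|\leq T} e^{-D/2}\,dt = \int_0^\infty \tfrac12 e^{-s/2}\,|E_s|\,ds \ll S^{-1/2}. \]
Inserting this into Step 1 gives
\[ Q_L(W) \leq A\, L\, S^{-1/2}, \]
which is the claimed inequality with an absolute constant $A$.
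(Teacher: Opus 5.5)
Citing the result is exactly what the paper does: it gives no proof and simply invokes the Kolmogorov--Rogozin inequality in its Kesten--Petrov form. Your optional self-contained sketch, however, has a genuine gap at Step 3. Steps 1 and 2 are correct. The level-set bound $|E_s| \ll \sqrt{(1+s)/S}$, though, is not a technical lemma. It carries the whole content of the inequality, and you do not prove it.

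Nor does the tool you name supply it. The Kneser/Raikov--Macbeath bound $|A+B| \ge \min(|G|,|A|+|B|)$ that powers Hal\'asz-type level-set arguments lives on a compact group such as a torus. Here $D$ is not periodic, and on $\mathbb{R}$ you only get $|kE_s| \ge k|E_s|$ with $kE_s \subseteq [-kT,kT]$. That gives no gain in relative density. So subadditivity of $\sqrt{D}$ together with a lower bound on the average of $D$ does not force $kE_s-kE_s$ to fill most of some interval $[-R,R]$ with $R \ge T$, which is what you would need for a contradiction.

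The missing idea is Jensen's inequality, which makes level sets unnecessary. Let $\mu := \sum_j \mathcal{L}(Y_j)$ and $\Sigma := \int \min(1,T^2x^2)\,d\mu(x)$; your Step 2 computation shows exactly $\Sigma \ge T^2S/4$. Let $\nu := \Sigma^{-1}\min(1,T^2x^2)\,\mu$, which is a probability measure. Since $D(t) = \Sigma \int \frac{1-\cos tx}{\min(1,T^2x^2)}\,d\nu(x)$, convexity of $\exp$ gives
\[ e^{-D(t)/2} \le \int \exp\Bigl(-\frac{\Sigma}{2}\cdot\frac{1-\cos tx}{\min(1,T^2x^2)}\Bigr)\,d\nu(x). \]
By Fubini it suffices that, for each fixed $x \neq 0$, the $t$-integral of the integrand over $[-T,T]$ is $\ll T\Sigma^{-1/2}$.

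For $|x|<1/T$, use $1-\cos u \ge \frac{11}{24}u^2$ for $|u|\le 1$; this gives a Gaussian integral of size $\ll T\Sigma^{-1/2}$.

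For $|x| \ge 1/T$, substitute $u=tx$. The range $[-T|x|,T|x|]$ meets at most $T|x|/\pi+2$ periods. Each period contributes at most $\int_{\mathbb{R}} e^{-\Sigma u^2/\pi^2}\,du \ll \Sigma^{-1/2}$, using $1-\cos u \ge 2u^2/\pi^2$ on $[-\pi,\pi]$. The Jacobian factor satisfies $1/|x| \le T$, so this case is also $\ll T\Sigma^{-1/2}$.

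Hence $\int_{|t|\le T} e^{-D/2}\,dt \ll T\Sigma^{-1/2} \ll S^{-1/2}$, and your Step 1 concludes the proof. Running the same argument with $e^{-D/s}$ shows that your target $|E_s| \ll \sqrt{s/S}$ is in fact true, but it comes out as a corollary of this argument, not as a route to it.
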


This foundational theorem demonstrates that the independent variables $Z_p$ in the modeled space cannot concentrate their probability mass in small intervals; consequently, the probability of them falling into the desired neighborhood tends to zero at an explicit rate.

\section{Main Results}
In this section, by transitioning to a periodic probability space and optimizing the parameters, we provide an explicit upper bound for the counting function of the solutions, and as a direct consequence, we prove that the natural density of $A_k$ is zero. Before stating and proving the main theorem, it is worth noting that, using Theorem 1.1 in the paper by Mangerel, it easily follows that the set $A_k$ has logarithmic density zero.

\begin{theorem}
Let $k > 1$ be an integer number. For sufficiently large $x$, we have
\[ A_k(x) \ll_k \frac{x}{\sqrt{\log \log \log x}}. \]
\end{theorem}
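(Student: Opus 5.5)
The idea is to rewrite the equation additively. For every solution $n$ we have $\sigma(n+1)=k\sigma(n)$, so
\[ g(n+1)-g(n)=\log k+\log\frac{n}{n+1}=\log k+O(1/n). \]
Thus every solution $n\in(\sqrt{x},x]$ satisfies $D(n):=g(n+1)-g(n)\in[\log k-L,\log k+L]$ with, say, $L=x^{-1/2}$. The plan is to show that this event has probability $\ll_k(\log\log\log x)^{-1/2}$ under $\nu_x$.

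\textbf{Step 1 (truncation).} We truncate $g$ at a parameter $y$, writing $g=g_y+g^y$, where $g_y(n)=\sum_{p\le y}\ell(p^{v_p(n)})$. For the tail, the Taylor bounds of Section 2 give $g^y(m)\le\sum_{p\mid m,\,p>y}p^{-1}+O(y^{-1})$. Averaging over $n\le x$ yields
\[ \frac1x\sum_{n\le x}\bigl(g^y(n)+g^y(n+1)\bigr)\ll\sum_{p>y}p^{-2}+y^{-1}\ll\frac{1}{y}, \]
so by Markov's inequality the tail exceeds $\varepsilon$ on at most $O(x/(\varepsilon y))$ integers. Outside this exceptional set, every solution satisfies $D_y(n):=g_y(n+1)-g_y(n)\in[\log k-2\varepsilon,\log k+2\varepsilon]$.

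\textbf{Step 2 (periodic model).} Next we move to a periodic probability space. Let $M=\prod_{p\le y}p^{e_p}$ with $p^{e_p}\approx x^{\delta}$-scale bounds chosen so that $\log M=o(\log x)$; here it suffices that $\theta(y)<1.02y$, with $y$ about $\log x$ up to constants. By the Chinese Remainder Theorem, $D_y(n)$ modulo these prime powers depends only on $n\bmod M$, up to an error from $p^{e_p}\mid n$ or $p^{e_p}\mid n+1$, which costs $\sum_{p\le y}p^{-e_p}$ in density. The distribution of $D_y$ on $n\le x$ then differs from that on $\mathbb{Z}/M\mathbb{Z}$ by $O(M/x)$. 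On $\mathbb{Z}/M\mathbb{Z}$ the pairs $(v_p(n),v_p(n+1))$ are independent across $p$ by CRT, so $D_y=\sum_{p\le y}X_p$ with independent $X_p=\ell(p^{v_p(n+1)})-\ell(p^{v_p(n)})$.

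\textbf{Step 3 (anti-concentration).} We apply the Kesten--Petrov theorem with $L=4\varepsilon$ and local scales $L_p=\tfrac{1}{4p}$, valid for primes $p\ge 1/(16\varepsilon)$, i.e.\ $p\in(z,y]$ with $z\asymp 1/\varepsilon$. For such $p$, the value $X_p$ is $0$ with probability $1-2/p$, and $\pm g(p)\approx\pm 1/p$ each with probability about $1/p$. Since $g(p)\ge 1/(2p)>L_p$, no interval of length $L_p$ contains both $0$ and $\pm g(p)$, so $1-Q_{L_p}(X_p)\ge 1/p$. Hence
\[ \sum_{z<p\le y}L_p^2\bigl(1-Q_{L_p}(X_p)\bigr)\gg\sum_{z<p\le y}p^{-3}\asymp z^{-2}/\log z, \]
which gives $Q_L(D_y)\ll\varepsilon\, z\sqrt{\log z}\asymp\sqrt{\log(1/\varepsilon)}$. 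That is useless. This is the main obstacle: the natural scale $1/p$ of the increments is far smaller than $\varepsilon$ unless we use a different weighting.

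The fix I would try first is to take $L_p=L$ only for primes $p\le 1/(2L)$, so that $g(p)>L$ and $1-Q_L(X_p)\ge 1/p$. This gives
\[ Q_L\ll L\Bigl(L^2\sum_{p\le 1/(2L)}p^{-1}\Bigr)^{-1/2}=\bigl(\log\log(1/L)\bigr)^{-1/2}. \]
This requires $y\ge 1/(2L)\approx 1/\varepsilon$, while the truncation error from Step 1 needs $\varepsilon y\to\infty$. Choosing $\varepsilon=y^{-1/2}$ balances these: the truncation error is $O(y^{-1/2})$ and the concentration bound is $\ll(\log\log y)^{-1/2}$. Finally, the CRT error forces $\log M\ll y\le c\log x$, so with $y\asymp\log x$ we get $\log\log y\asymp\log\log\log x$. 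Adding the three errors gives $A_k(x)\ll_k x(\log\log\log x)^{-1/2}$.
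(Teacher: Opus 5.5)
Your argument is essentially the paper's proof: the same logarithmic reformulation, truncation at $p\le y$ with a Markov bound on the tail, CRT reduction of the periodic truncated difference to independent summands, and Kesten--Petrov with global and local scales both $\asymp\varepsilon$ on the primes $p\lesssim 1/\varepsilon$ where $\ell(p)$ exceeds the scale. The paper takes $\varepsilon=1/(13y)$ using its sharper tail bound $x/(\varepsilon y\log y)$, while you take $\varepsilon=y^{-1/2}$; both give $(\log\log y)^{-1/2}$.

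The one thing to repair is the exponent cap in Step 2, which is inconsistent as written. Prime powers of size $x^{\delta}$ make $\log M\gg\log x$, while bounded exponents leave a non-vanishing error $\sum_{p\le y}p^{-e_p}$. A consistent choice is the paper's uniform cap $r=\lfloor\log\log\log x\rfloor$ with $y=\log x/(3\log\log x)$, or alternatively $p^{e_p}\approx y^2$ with $y=c\log x$ for a small constant $c$.
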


\begin{proof}
Let $n \in A_k$. By dividing both sides of the main equation by $n(n + 1)$ and applying the logarithmic function, the equation is algebraically equivalent to the following form:
\[ g(n + 1) - g(n) = \log k - \log \left(1 + \frac{1}{n}\right). \]

To control the behavior of the function $g$, we truncate it using two constant parameters $y \geq 2$ and $r \geq 1$. We define the truncated function as $h_{y,r}(n) := \sum_{p \leq y} \ell\left(p^{\min{(v_p(n), r)}}\right)$ and its consecutive difference as $D_{y,r}(n) := h_{y,r}(n + 1) - h_{y,r}(n)$.

For a real number $x > 0$ and an error parameter $\epsilon > 0$, we construct the following sets in the interval $[1, x]$:
\begin{enumerate}
    \item $E_0(x, \epsilon) := \left\{n \leq x : \log \left(1 + \frac{1}{n}\right) > \epsilon \right\}$
    \item $E_1(x, y, \epsilon) := \{n \leq x : g_{>y}(n) > \epsilon \lor g_{>y}(n + 1) > \epsilon\}$
    \item $E_2(x, y, r) := \{n \leq x : \exists p \leq y, v_p(n) > r \lor v_p(n + 1) > r\}$
    \item $S(x, y, r, \epsilon) := \{n \leq x : |D_{y,r}(n) - \log k| \leq 3\epsilon\}$
\end{enumerate}

Suppose $n \leq x$ is a solution to the equation that does not belong to any of the sets $E_0, E_1$, and $E_2$. Since $n \notin E_2$, the decomposition $g(n) = h_{y,r}(n) + g_{>y}(n)$ is valid for both $n$ and $n + 1$. By substituting this into the main equation and applying the triangle inequality, the conditions $n \notin E_0$ and $n \notin E_1$ imply that $|D_{y,r}(n) - \log k| \leq 3\epsilon$. Therefore, the set inclusion $A_k \cap [1, x] \subseteq E_0 \cup E_1 \cup E_2 \cup S$ strictly holds, and we have:
\begin{equation}
A_k(x) \leq \text{\#}E_0(x, \epsilon) + \text{\#}E_1(x, y, \epsilon) + \text{\#}E_2(x, y, r) + \text{\#}S(x, y, r, \epsilon). \label{eq:1}
\end{equation}

Now, we calculate an explicit upper bound for each of these error sets.

For the bound of $E_0$, condition $\log \left(1 + 1/n\right) > \epsilon$ implies $n < 1/\epsilon$. Thus, $\text{\#}E_0(x, \epsilon) \leq \frac{1}{\epsilon}$.

For the bound of $E_1$, considering the non-negativity of the terms $f_y(n) := g_{>y}(n) = \sum_{p > y} \ell(p^{v_p(n)})$, for any prime $p$ and integer $a \geq 1$, we have:
\[ \ell(p^a) = \log \left( \frac{1 - p^{-(a+1)}}{1 - p^{-1}} \right) \leq \log \left(\frac{1}{1 - p^{-1}}\right) = \log \left(\frac{p}{p - 1}\right) \leq \frac{2}{p}. \]

On the other hand, $\text{\#}\{n \leq x + 1 : v_p(n) = a\} \leq \lfloor \frac{x+1}{p^a} \rfloor \leq \frac{x+1}{p^a}$. Consequently, we obtain:
\[ \sum_{n \leq x+1} f_y(n) \leq (x + 1) \sum_{p > y} \sum_{a \geq 1} \frac{2}{p^{a+1}} = 2(x + 1) \sum_{p > y} \frac{1}{p(p - 1)}. \]

Now, using the Prime Number Theorem and partial summation, we have $\sum_{n \leq x+1} f_y(n) \ll \frac{x}{y \log y}$. Applying Markov's inequality to this non-negative function and considering the union bound for the two events associated with $n$ and $n + 1$, we get:
\[ \text{\#}E_1(x, y, \epsilon) \ll \frac{x}{\epsilon \cdot y \log y}. \]

For the bound of $E_2$, the number of integers up to $x$ that are divisible by $p^{r+1}$ is at most $\lfloor \frac{x}{p^{r+1}} \rfloor$. Using the integral test for the series over all integers $n \geq 2$, we have:
\[ \sum_{p \leq y} \frac{1}{p^{r+1}} \leq \sum_{n=2}^{\infty} \frac{1}{n^{r+1}} \leq \frac{1}{2^{r+1}} + \int_{2}^{\infty} \frac{dt}{t^{r+1}} = \frac{1}{2^{r+1}} + \frac{1}{r \cdot 2^r} \ll 2^{-r}. \]
Consequently, $\text{\#}E_2(x, y, r) \ll x \cdot 2^{-r}$.

Now, the function $D_{y,r}(n)$ over the natural numbers is strictly periodic, with a period of $M_{y,r} := \prod_{p \leq y} p^r$. For sufficiently large $x$, the parameter $y$ will also be sufficiently large. Thus, by the Prime Number Theorem and Chebyshev's inequality, we can write $\log M_{y,r} = r\theta(y) < 1.02ry$, and consequently $M_{y,r} \leq \exp(1.02ry)$.

We divide the interval $[1, x]$ into $m = \lfloor x / M_{y,r} \rfloor$ full periods and a remaining part of length strictly less than $M_{y,r}$. We equip the finite space $\mathbb{Z}/M_{y,r}\mathbb{Z}$ with a uniform probability measure. By the Chinese Remainder Theorem, the following ring isomorphism holds:
\[ \mathbb{Z}/M_{y,r}\mathbb{Z} \cong \prod_{p \leq y} \mathbb{Z}/p^r\mathbb{Z}. \]

Under this isomorphism, the uniform measure on the left-hand finite space is transferred to the product of uniform measures on the right-hand spaces. Consequently, if the random variable $N$ follows a uniform distribution on $\mathbb{Z}/M_{y,r}\mathbb{Z}$, its components $N_p := N \pmod{p^r}$ will be mutually independent random variables uniformly distributed on $\mathbb{Z}/p^r\mathbb{Z}$. Therefore, the distribution of $D_{y,r}(n)$ over each full period is exactly identical to the distribution of the following random variable:
\[ W_{y,r} := \sum_{p \leq y} Z_{p,r}, \quad Z_{p,r} := \ell\left(p^{\min(v_p(N_p + 1), r)}\right) - \ell\left(p^{\min(v_p(N_p), r)}\right). \]

Since the remaining part introduces an error bounded by its own length, $M_{y,r}$, we obtain:
\begin{equation}
\text{\#}S(x, y, r, \epsilon) \leq x \cdot \mathbb{P}(|W_{y,r} - \log k| \leq 3\epsilon) + \exp(1.02ry). \label{eq:2}
\end{equation}

Now, to apply Petrov's theorem, we first formulate the structure of the variables $Z_{p,r}$ in the form of the following lemma.

\begin{lemma}
Suppose $p \geq 5$ and the length of a closed interval $L$ is $L$. If $L < \ell(p)$, then for every $r \geq 1$ we have:
\[ Q_L(Z_{p,r}) = 1 - \frac{2}{p}. \]
\end{lemma}

\begin{proof}
The variables $N_p$ and $N_p + 1$ are consecutive; thus, they cannot simultaneously be multiples of $p$. This restricts the support of the variable $Z_{p,r}$ to three disjoint clusters:
\begin{enumerate}
    \item \textbf{Zero cluster:} $\mathbb{P}(Z_{p,r} = 0) = \mathbb{P}(p \nmid N_p(N_p + 1)) = 1 - 2/p$.
    \item \textbf{Positive cluster:} If $p \mid N_p + 1$. The total probability mass of this cluster is $1/p$, and its values fall within the interval $[\ell(p), \ell(p^r)]$; we have $\mathbb{P}(Z_{p,r} > 0) = 1/p$.
    \item \textbf{Negative cluster:} If $p \mid N_p$. Similar to the positive cluster, the total mass is $1/p$, and it takes values within the interval $[-\ell(p^r), -\ell(p)]$; we have $\mathbb{P}(Z_{p,r} < 0) = 1/p$.
\end{enumerate}

Therefore, we have $\text{supp}(Z_{p,r}) \subset \{0\} \cup \{\pm \ell(p^a)\}_{a=1}^r$. Since the minimum geometric distance between zero and any non-zero point is $\ell(p)$, and $L < \ell(p)$, no closed interval of length $L$ can simultaneously cover more than one component of the support. Thus, the maximum mass occurs exactly at zero. This completes the proof of Lemma 3.2.
\end{proof}

We now define the set of effective primes as $\mathcal{P}^* := \{p \mid 5 \leq p \leq \min(y, \frac{1}{13\epsilon})\}$. For any $p$ in this set, utilizing the bound $\log(1 + x) \geq x/2$, we have:
\[ \ell(p) = \log \left(1 + \frac{1}{p}\right) \geq \frac{1}{2p} \geq 6.5\epsilon > 6\epsilon. \]

Given the conditions of the preceding lemma, by setting the scale $L = 6\epsilon$, for all $p \in \mathcal{P}^*$ we obtain $Q_{6\epsilon}(Z_{p,r}) = 1 - 2/p$.

We decompose the independent random variable $W_{y,r}$ as $W_{y,r} = S^* + S'^*$, where $S^* := \sum_{p \in \mathcal{P}^*} Z_{p,r}$. Since the convolution with an independent random variable can only decrease the concentration function, we have $Q_{6\epsilon}(W_{y,r}) \leq Q_{6\epsilon}(S^*)$. Now, we apply Petrov's inequality (discussed in the previous section) on $S^*$ with both global and local scales set to $L = L_p = 6\epsilon$. By substitution, we obtain:
\[ Q_{6\epsilon}(W_{y,r}) \leq A \cdot 6\epsilon \cdot \left( \sum_{p \in \mathcal{P}^*} (6\epsilon)^2 (2/p) \right)^{-1/2}. \]

Upon simplification, the following inequality is deduced:
\begin{equation}
\mathbb{P}(|W_{y,r} - \log k| \leq 3\epsilon) \leq A \left( \sum_{p \in \mathcal{P}^*} \left(\frac{2}{p}\right) \right)^{-1/2}. \label{eq:3}
\end{equation}

Combining inequalities (\ref{eq:1}), (\ref{eq:2}), and (\ref{eq:3}), for all sufficiently large $x$, we have:
\[ \frac{A_k(x)}{x} \leq \frac{1}{x\epsilon} + \frac{C_1}{\epsilon \cdot y \log y} + C_2(2^{-r}) + \frac{\exp(1.02ry)}{x} + A \left( \sum_{p \in \mathcal{P}^*} \left(\frac{2}{p}\right) \right)^{-\frac{1}{2}}. \]

We now fix the parameters as functions of $x$ as follows:
\[ r(x) := \lfloor \log \log \log x \rfloor, \quad y(x) := \frac{\log x}{3 \log \log x}, \quad \epsilon(x) := \frac{1}{13y(x)}. \]

The explicit asymptotic evaluation of each term as $x \to \infty$ is as follows:

For the Petrov term, given the choice of $\epsilon$, the upper bound for the primes in the effective set is exactly $y$. Thus, $\mathcal{P}^* = \{5 \leq p \leq y\}$. Since excluding a few constant primes does not affect the asymptotic behavior, Mertens' theorem yields:
\[ \sum_{p \in \mathcal{P}^*} \left(\frac{2}{p}\right) \sim 2 \log \log y \sim 2 \log \log \log x \]
Thus, this term is of order $O((\log \log \log x)^{-1/2})$.

For the $E_1$ term, substitution and simplification yield $\frac{13}{\log y} \sim \frac{13}{\log \log x}$. On the other hand, $(\log \log x)^{-1} = o((\log \log \log x)^{-1/2})$. Hence, this term is asymptotically smaller than the Petrov term.

For the $E_2$ term, we have $2^{-r} \asymp (\log \log x)^{-\log 2}$. Since  $\log 2 > 0.5$ and $(\log \log x)^{-\log 2} = o((\log \log \log x)^{-1/2})$, this term is also dominated by the Petrov term.

For the periodic error term, since $ry = o(\log x)$, for sufficiently large $x$ we always have $\exp(1.02ry) \leq x^{1/2}$. Consequently, this term is bounded by $x^{-1/2}$ and thus by $o((\log \log \log x)^{-1/2})$.

For the $E_0$ term, since it equals $\frac{13y}{x}$, it is strictly $o(1)$ compared to the logarithmic terms.

Since all error terms are strictly of an order smaller than $(\log \log \log x)^{-1/2}$, they are all absorbed by the Petrov term. Consequently, the asymptotic upper bound is exclusively governed by the behavior of the concentration function, and we strictly obtain:
\[ A_k(x) \ll_k \frac{x}{\sqrt{\log \log \log x}} \]

This concludes the proof.
\end{proof}

The upper asymptotic density of the solution set is defined as:
\[ \bar{d}(A_k) = \limsup_{x \to \infty} \frac{A_k(x)}{x}. \]

From Theorem 3.1, we deduce:
\[ \bar{d}(A_k) \leq \lim_{x \to \infty} \frac{C_k}{\sqrt{\log \log \log x}} = 0. \]

Since density is a non-negative quantity, it strictly follows that $\bar{d}(A_k) = 0$. Therefore, the natural density of the solution set for the equation $\sigma(n + 1) = k\sigma(n)$ is zero.

\section{A Family of solutions}
In Section 2, we observed the structural connections of this equation with $k$-layered numbers. We also noted that in the special case $k = 2$, these integers correspond to Zumkeller numbers. Furthermore, in Section 3, we demonstrated the asymptotic rarity of the solutions to this equation. In this section, we intend to first perform a computational search to find several initial solutions for small values of $k$, establishing that the solution sets for these initial values are not empty. Subsequently, invoking Schinzel's H Hypothesis, we will prove that the equation $\sigma(n + 1) = 2\sigma(n)$ has infinitely many solutions. We will then formulate these observations into a generalized conjecture.

Through a computational search, we found a short list of solutions for $k \in \{2, 3\}$ up to a bounded interval. For $k = 2$, the solution set is
\[ \{5, 125, 1253, 1673, 3127, 5191, 7615, 12035\}, \]
and for $k = 3$, the set is
\[ \{1, 1919, 2759, 11219\}. \]

\begin{theorem}
Assuming the truth of Schinzel's H Hypothesis, the equation $\sigma(n + 1) = 2\sigma(n)$ possesses infinitely many solutions.
\end{theorem}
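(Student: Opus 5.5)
The plan is to exhibit an explicit one-parameter polynomial family of integers $n(t)$ such that $n(t)$ and $n(t)+1$ factor as a fixed integer times values of linear polynomials in $t$. Whenever those linear values are simultaneously prime, the identity $\sigma(n+1)=2\sigma(n)$ should hold identically in $t$. Schinzel's H Hypothesis then supplies infinitely many such $t$, and hence infinitely many solutions. The whole difficulty is concentrated in finding a suitable \emph{seed}; the rest is routine.

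\textbf{Step 1 (shape of the family).} Take fixed coprime integers $a,b$ and look for $n=a\,q$, $n+1=b\,s$, where $q=q(t)$ and $s=s(t)$ are linear polynomials. When $q$ and $s$ are primes not dividing $ab$, we get $\sigma(n)=\sigma(a)(q+1)$ and $\sigma(n+1)=\sigma(b)(s+1)$. Requiring both $aq+1=bs$ and $\sigma(b)(s+1)=2\sigma(a)(q+1)$ identically in $t$ forces two conditions, obtained by comparing the coefficients of $q$ and the constant terms:
\[
a=b+1 \qquad\text{and}\qquad \sigma(b)\,(b+1)=2\,\sigma(b+1)\,b .
\]
With $b=a-1$, the choice $q(t)=(a-1)t-1$ and $s(t)=at-1$ gives
\[
n(t)=a\big((a-1)t-1\big), \qquad n(t)+1=(a-1)(at-1),
\]
so that $\sigma(n)=\sigma(a)(a-1)t$ and $\sigma(n+1)=\sigma(a-1)\,a\,t$. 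The desired equation therefore reduces to the seed condition on the consecutive pair $(a-1,a)$.

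\textbf{Step 2 (finding a seed).} This is the step I expect to be the main obstacle, and I have not yet found a seed. One needs consecutive integers $b=a-1$ and $a$ with $\sigma(b)/b=2\,\sigma(a)/a$. Since $\sigma(a)/a>1$, this requires $b$ to be abundant and $a$ to have small abundancy, for instance $a$ prime or a prime power. I would first search computationally over $a=p$ prime, for which the condition becomes $\sigma(p-1)\,p=2(p+1)(p-1)$, and then over $a$ with one or two prime factors. If no seed exists in this rigid form, I would enlarge the template to
\[
n=a\,q_1(t)\cdots q_i(t), \qquad n+1=b\,s_1(t)\cdots s_j(t),
\]
with several linear factors. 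Here one degree of freedom is used to make the product identity hold, and the parametrization is modelled on the listed solutions, such as $1673=7\cdot 239$ with $1674=2\cdot 3^3\cdot 31$, or $3127=53\cdot 59$ with $3128=2^3\cdot 17\cdot 23$. If those factors do not come out linear, I would allow them to be quadratic but irreducible.

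\textbf{Step 3 (applying the hypothesis).} Once a family is fixed, I will verify the hypotheses of Schinzel's H Hypothesis for the linear polynomials involved, for example $P_1(t)=(a-1)t-1$ and $P_2(t)=at-1$. Each is irreducible with positive leading coefficient. It must also be checked that no prime $\ell$ divides $P_1(t)P_2(t)$ for every $t$, which is a finite check modulo each $\ell\le 2$ together with the primes dividing the coefficients; restricting $t$ to a suitable residue class repairs any local obstruction. The hypothesis then gives infinitely many $t$ with all $P_i(t)$ prime. For all large such $t$, these primes exceed $a$ and are therefore coprime to $a(a-1)$, so the multiplicativity computation of Step 1 is valid. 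This yields infinitely many distinct solutions $n(t)$ of $\sigma(n+1)=2\sigma(n)$.
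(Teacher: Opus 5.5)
\textbf{There is a genuine gap.} The whole content of the proof is an explicit polynomial family, and your proposal does not supply one. The one-factor template you work out in Step 1 also cannot be completed with current knowledge.

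Write $m=a-1$, so your seed condition is $(m+1)\sigma(m)=2m\,\sigma(m+1)$. Since $\gcd(m,m+1)=1$, this forces $m\mid\sigma(m)$. Writing $\sigma(m)=km$ then gives $\sigma(m+1)=k(m+1)/2$.
\begin{enumerate}
\item The cases $k=1$ and $k=2$ are impossible.
\item If $k\ge 3$ is odd, then $m+1$ is even, so $m$ is an odd $k$-perfect number.
\item If $k=2j$ with $j\ge 2$, then $m$ and $m+1$ are both multiperfect, and one of them is odd.
\end{enumerate}
So every seed of your form yields an odd multiperfect number. Their existence is a long-standing open problem, and a computational search for one is hopeless. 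Your first search, over prime $a=p$, fails immediately: $p\nmid 2(p^2-1)$ for $p\ge 3$, and $p=2$ fails by direct check.

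\textbf{The missing idea} is the multi-factor version you mention only as a fallback, with two linear primes on each side. This is exactly what the paper does:
\[
n(x)=(252x+223)(6x+5),\qquad n(x)+1=6(7x+6)(36x+31).
\]
Arrange the shifted primes $q_i+1$ and $s_j+1$ to share linear forms. Matching leading coefficients still gives $\sigma(b)/b=2\sigma(a)/a$, but the constraint $a=b+1$ no longer arises. You may therefore take $a=1$ and $b=6$, where the condition just says $b$ is perfect.

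Concretely, the shifted primes factor as
\[
252x+224=28(9x+8),\quad 6x+6=6(x+1),\quad 7x+7=7(x+1),\quad 36x+32=4(9x+8).
\]
Hence $\sigma(n)=168(x+1)(9x+8)$ and $\sigma(n+1)=12\cdot 28\,(x+1)(9x+8)=2\sigma(n)$. The consecutiveness identity $n+1=6(7x+6)(36x+31)$ reduces exactly to $9(x+1)-(9x+8)=1$.

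Once the family is in hand, your Step 3 goes through, now for four linear polynomials. The paper rules out a fixed prime divisor of their product $P$ by noting that such a prime would have to divide $\gcd(P(1),P(2))=5$, while $5\nmid P(3)$.
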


\begin{proof}
Let $n(x) = (252x + 223)(6x + 5)$ and $n(x) + 1 = 6(7x + 6)(36x + 31)$.

A straightforward calculation yields $1512x^2 + 2598x + 1115$ and $1512x^2 + 2598x + 1116$, respectively. Hence, their consecutiveness is evident. Moreover, through simple calculations and utilizing the values of the sum-of-divisors function for prime numbers, it becomes clear that if these linear polynomials simultaneously take prime values for an integer $x$, then the equation $\sigma(n(x) + 1) = 2\sigma(n(x))$ holds. Now, we verify the conditions of Schinzel's H Hypothesis.

Let
\[ P(x) = (252x + 223)(6x + 5)(7x + 6)(36x + 31). \]

Note that $P(1) = 4550025$ and $P(2) = 25459480$. Therefore, the greatest common divisor of these values is $5$. By evaluating the factors of $P(x)$ at $x = 3$, we deduce that none of them are divisible by $5$. Hence, there is no fixed prime divisor for all values of $P(x)$, and the conditions of Schinzel's H Hypothesis are satisfied. Thus, the theorem is proved.
\end{proof}

With all these observations, we formulate this generalization of the Erdős-Sierpiński conjecture as follows.

\begin{conjecture}
For any positive integer $k$, the equation $\sigma(n + 1) = k\sigma(n)$ has infinitely many solutions.
\end{conjecture}

\section{Discussion and Conclusion}
In this paper, the equation $\sigma(n + 1) = k\sigma(n)$ was studied from two perspectives: asymptotic analysis and the existence of solutions. In this concluding section, alongside reviewing the main results, we take an intuitive and observational look at the structural placement of the solutions of this equation among the known classes in number theory, which may pave the way for future research.

The central theme of this paper was studying the asymptotic behavior and density of the set $A_k$. By transitioning from classical methods to a probabilistic space, and harnessing the Kolmogorov-Rogozin anti-concentration inequality, we successfully proved the explicit upper bound $A_k(x) \ll_k x / \sqrt{\log \log \log x}$. This bound establishes that the natural density of the solution set is zero for any $k > 1$.

Alongside this result, the existential investigation of the solutions demonstrated that despite the zero density, this set is not empty. Relying on the polynomial framework and Schinzel's H Hypothesis, the conditional infinitude of solutions for the case $k = 2$ was proved, which led to the formulation of Conjecture 4.2 as a generalization of the Erdős-Sierpiński conjecture for all natural values of $k$.

Although our primary results were of an analytic and asymptotic nature, computational explorations and the examination of the discovered families of solutions reveal a network of deep, intuitive connections with several significant classes of integers. From an observational standpoint, the behavior of the component $N = n + 1$ in this equation has meaningful connections with the following categories.

\textbf{Zumkeller and $k$-layered numbers:} An integer $N$ is a $k$-layered number if its set of divisors can be partitioned into $k$ disjoint subsets such that the sum of each subset equals $\sigma(N)/k$ (the case $k = 2$ corresponds to Zumkeller numbers). A necessary algebraic condition for this partition is the congruence $\sigma(N) \equiv 0 \pmod{k}$, a condition that our equation explicitly satisfies. Our computational evidence strongly suggests that the non-trivial solutions to this equation generally possess this combinatorial partition property. This implies that the solutions to our equation practically avoid falling into the category of ``weird numbers''---abundant numbers for which the subset sum partition of divisors fails.

\textbf{Practical and abundant numbers:} The intuitive success of our equation's solutions in forming $k$-layered structures is rooted in the nature of their factorization. Numerical investigations show that for many solutions, the value $N$ has a large number of small prime divisors (resulting from higher powers of small primes). This characteristic frequently places the solutions in the category of practical numbers. The property of being a practical number provides the intuitive guarantee that the summation of subsets to achieve $k$-layers will be successful. Furthermore, for $k > 1$, our equation clearly necessitates that $N$ is classified as an abundant number.

\textbf{$k$-perfect numbers:} As established in previous research regarding this equation, whenever the variable $n$ is a prime number, satisfying the equation is equivalent to the fact that the integer $n + 1$ is exactly a $k$-perfect number. This algebraic link indicates that the search for prime solutions to our equation coincides with the search for $k$-perfect numbers that satisfy the specific properties of the aforementioned equation.

\renewcommand{\refname}{References}

\end{document}